\theoremstyle{plain}
\newtheorem{theorem}{Theorem}[section]
\newtheorem*{theorem*}{Theorem}
\newtheorem{lemma}[theorem]{Lemma}
\theoremstyle{definition}
\numberwithin{equation}{section}
\begin{document}

\title{Stability Of The Parabolic Poincar\'e Bundle}

\author[S. Basu]{Suratno Basu}
\address{Institute of Mathematical Sciences, HBNI, CIT Campus, Tharamani, Chennai 600113, India}
\email{suratnob@imsc.res.in}

\author[I. Biswas]{Indranil Biswas}
\address{School of Mathematics, Tata Institute of Fundamental Research, Homi Bhabha Road, Mumbai 
4000005, India}
\email{indranil@math.tifr.res.in}

\author[K. Dan]{Krishanu Dan}
\address{Chennai Mathematical Institute, H1 Sipcot IT Park, 
Siruseri, Kelambakkam - 603103, INDIA.}
\email{krishanud@cmi.ac.in}

\subjclass[2010]{14H60, 14D20.}

\keywords{Parabolic bundle, Poincar\'e bundle, stability, moduli space.}

\begin{abstract}
Given a compact Riemann surface $X$ and a moduli space $M_{\alpha}(\Lambda)$ of
parabolic stable bundles on it of fixed determinant of complete parabolic flags,
we prove that the Poincar\'e parabolic bundle on $X\times M_{\alpha}(\Lambda)$ is
parabolic stable with respect to a natural polarization on $X\times M_{\alpha}(\Lambda)$.
\end{abstract}

\maketitle

\section{Introduction}

Let $X$ be a smooth irreducible complex projective curve of genus $g$, with 
$g\,\geq\, 2$. Fix an integer $r\,\geq\, 2$ and a line bundle $L$ over $X$ of 
degree $d$ such that $r$ and $d$ are coprime. Let $M_{r,L}$ denote the moduli 
space of isomorphism classes of stable bundles of rank $r$ with 
$\wedge^{r}E\,\simeq\, L$. A vector bundle $V$ over $X\times M_{r,L}$ is called a 
{\it Poincar\'e bundle} if its restriction $V\vert_{X\times \{[E]\}}$ is 
isomorphic to $E$ for all closed points $[E]\,\in\, M_{r,L}$. It is known that a 
Poincar\'e bundle exists; moreover, any two of them differ by tensoring with a 
line bundle pulled back from $M_{r,L}$. Balaji, Brambila-Paz and Newstead proved 
in \cite{BBN} that any such Poincar\'e bundle is stable with respect to any ample 
divisor in $X \times M_{r,L}$. Recently, Biswas, Gomez and Hoffman studied in 
\cite{BGH} the similar question for the moduli space of principal $G$-bundles.

In this short note we consider certain moduli spaces of stable parabolic bundles on 
$X$. Let us fix a finite set $D$ of $n$ closed points in $X$. We denote by 
$M_{\alpha}(\Lambda)$ the moduli space of stable parabolic bundles of rank $r$ on 
$X$ with fixed determinant $\Lambda$ having full flags at each points of $D$ and 
rational parabolic weights $\alpha\,=\,\{\alpha_{j}^i\}$, $1\,\leq\, j \,\leq\, r$ 
and $1 \,\leq\, i \,\leq\, n$. In this case it is known that there 
exists a vector bundle $\mathcal{U}_{\alpha}$ over $X\times M_{\alpha}(\Lambda)$ 
which has a natural parabolic structure over the divisor $D\times 
M_{\alpha}(\Lambda)$, and moreover, its restriction to each closed points $[E_*]$ 
is isomorphic to $E_*$ as parabolic bundle \cite{BD}. Any two such bundles differ 
by tensoring with a line bundle pulled back from $M_{\alpha}(\Lambda)$. We call 
such bundles {\it Poincar\'e parabolic bundles}. So, it is natural to ask whether 
this bundles are parabolic (slope) stable.

We prove the following:

\begin{theorem}
Let $\mathcal{U}$ be a Poincar\'e parabolic bundle over $X\times M_{\alpha}(\Lambda)$. 
Then $\mathcal{U}$ is a parabolic (slope) stable bundle with respect to a
natural ample divisor.
\end{theorem}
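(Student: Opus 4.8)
The plan is to test parabolic slope stability directly against a polarization of the form $H \,=\, a\,p^*\mathcal{O}_X(x_0) + b\,q^*\Theta$, where $p,q$ are the projections of $X\times M_\alpha(\Lambda)$ onto its two factors, $x_0\in X\setminus D$ is a point, $\Theta$ is a fixed ample class on $M_\alpha(\Lambda)$, and $a,b$ are positive integers; the ``natural'' polarization will be the one with $a/b$ sufficiently small, a bound to be fixed at the end. Write $N\,=\,\dim M_\alpha(\Lambda)$ and $f\,=\,p^*[x_0]$ for the fiber class. It suffices to show $\mathrm{par}\mu_H(\mathcal{F}) < \mathrm{par}\mu_H(\mathcal{U})$ for every saturated parabolic subsheaf $\mathcal{F}\subsetneq\mathcal{U}$ (so that $\mathcal{U}/\mathcal{F}$ is torsion-free); since a full-rank saturated subsheaf of $\mathcal{U}$ equals $\mathcal{U}$, I may assume $0 < s \,:=\, \mathrm{rk}\,\mathcal{F} < r$.

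First I would record the slope formula. Using $f^2\,=\,0$ and the vanishing of the ``mixed'' classes in $H^1(X)\otimes H^1(M_\alpha(\Lambda))$ against both $f$ and $q^*\Theta^N$, one gets, up to a common positive factor,
\begin{equation*}
\mathrm{par}\mu_H(\mathcal{F}) \;=\; P\cdot\mathrm{par}\mu^{v}(\mathcal{F}) \,+\, Q\cdot\mu^{h}(\mathcal{F}),
\end{equation*}
where $P\,=\,b^N(\Theta^N)$ and $Q\,=\,N a\,b^{N-1}$ are positive, $\mathrm{par}\mu^{v}(\mathcal{F})$ is the parabolic slope of the restriction of $\mathcal{F}$ to a vertical fiber $X\times\{[E_*]\}$, and $\mu^{h}(\mathcal{F})\,=\,\mu_\Theta(\mathcal{F}|_{\{x\}\times M_\alpha(\Lambda)})$ is the ordinary $\Theta$-slope of the restriction to a horizontal slice. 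The key bookkeeping point is that the parabolic divisor $D\times M_\alpha(\Lambda)$ consists of fiber classes, so the parabolic weights contribute only to the vertical term $\mathrm{par}\mu^v$; since $x_0\notin D$, the horizontal restriction carries no parabolic structure.

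Next I would bound the two terms separately. For the vertical term, restricting $\mathcal{F}$ to a general fiber $X\times\{[E_*]\}$ yields a parabolic subsheaf of the stable parabolic bundle $E_*$, so $\mathrm{par}\mu^v(\mathcal{F}) < \mathrm{par}\mu^v(\mathcal{U})$ strictly; because all parabolic degrees lie in a fixed lattice $\tfrac{1}{m}\mathbb{Z}$ (with $m$ a common denominator of the weights) and $0 < s < r$, this strict inequality improves to $\mathrm{par}\mu^v(\mathcal{U}) - \mathrm{par}\mu^v(\mathcal{F}) \,\geq\, \delta$ for a positive constant $\delta$ depending only on $r,m$ and $\Lambda$, uniform in $\mathcal{F}$. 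For the horizontal term, I would choose a general $x\in X$ (depending on $\mathcal{F}$) at which the sequence $0\to\mathcal{F}\to\mathcal{U}\to\mathcal{U}/\mathcal{F}\to0$ restricts to an injection $\mathcal{F}|_{\{x\}\times M_\alpha(\Lambda)}\hookrightarrow \mathcal{U}|_{\{x\}\times M_\alpha(\Lambda)}$, which is possible by generic flatness of the torsion-free quotient. Since $\{\mathcal{U}|_{\{x\}\times M_\alpha(\Lambda)}\}_{x\in X}$ is a bounded family, its maximal destabilizing $\Theta$-slopes are bounded above by a single constant $C$, whence $\mu^h(\mathcal{F}) \,\leq\, C$ for all $\mathcal{F}$; here one uses that $\mu^h(\mathcal{F})$ is cohomological, hence independent of the general point at which it is computed.

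Finally, combining these estimates gives
\begin{equation*}
\mathrm{par}\mu_H(\mathcal{F}) - \mathrm{par}\mu_H(\mathcal{U}) \;\leq\; -P\,\delta \,+\, Q\,\big(C - \mu^h(\mathcal{U})\big),
\end{equation*}
so choosing $a/b$ small enough that $Q\,(C-\mu^h(\mathcal{U})) < P\,\delta$ (and noting that if $C\le\mu^h(\mathcal{U})$ the bound is already immediate) forces the difference to be negative, proving stability. I expect the main obstacle to be the horizontal estimate: one must verify that the restriction of $\mathcal{F}$ to a general horizontal slice is genuinely a subsheaf of $\mathcal{U}|_{\{x\}\times M_\alpha(\Lambda)}$ and that the resulting slope bound is uniform in $\mathcal{F}$, which is exactly where the boundedness of the family $\{\mathcal{U}|_{\{x\}\times M_\alpha(\Lambda)}\}$ does the real work, together with checking carefully that the parabolic structure feeds only into the vertical slope and that the gap $\delta$ is uniform over all admissible $\mathcal{F}$.
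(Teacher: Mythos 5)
Your argument is essentially correct, but it takes a genuinely different route from the paper and delivers a slightly weaker conclusion. The paper's proof has two ingredients: a product lemma (Lemma \ref{two stable}, adapted from \cite{BBN}) saying that if the restrictions of $\mathcal{U}$ to general horizontal slices $\{z\}\times M_{\alpha}(\Lambda)$ are semistable and the restrictions to vertical fibers $X\times\{[E_*]\}$ are parabolic stable, then $\mathcal{U}$ is parabolic stable for \emph{every} polarization $a\theta_X+b\Theta_{M_{\alpha}}$ with $a,b>0$; and the hard step, Theorem \ref{th1}, which establishes semistability of $\mathcal{U}_z$ by pulling it back along a dominant rational map from the Prym variety of a spectral curve unramified over $z$ (where it becomes a direct sum of algebraically equivalent line bundles) and extending $f^*\Theta_{M_{\alpha}}$ to an ample class on the Prym via the determinant-of-cohomology computation. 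You bypass the spectral-curve input entirely: in place of semistability of $\mathcal{U}_z$ you use only that the bounded family $\{\mathcal{U}_x\}_{x\in X}$ has uniformly bounded maximal destabilizing slope $C$, and you absorb the resulting horizontal error by skewing the polarization so that $a/b$ lies below a threshold depending on $C$. Your vertical/horizontal decomposition of $\mathrm{par}\mu_H$, the uniform gap $\delta$ from discreteness of parabolic degrees in $\frac{1}{m}\mathbb{Z}$, and the reduction to saturated subsheaves (whose restrictions to general slices remain subsheaves up to codimension-two corrections) are all sound, though each is asserted rather than proved. The trade-off is clear: your proof is more elementary and works for an arbitrary ample class $\Theta$ on $M_{\alpha}(\Lambda)$, but the admissible polarizations form an inexplicit range $a/b\ll 1$ rather than the canonical full cone $a,b>0$ of the paper's final theorem, and you do not recover the semistability of $\mathcal{U}_z$, which is of independent interest. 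As a proof that $\mathcal{U}$ is parabolic stable with respect to \emph{some} ample divisor, your argument suffices; to match the paper's actual result you would still need the content of Theorem \ref{th1}.
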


We adopt the strategy of proof in \cite{BBN} in the given context.
 
\section{Preliminaries}

Let $X$ be an irreducible smooth complex projective curve of genus $g \,\geq\, 2$.
Fix $n$ distinct points $x_{1},\, \cdots\, ,x_{n}$ on $X$, and denote the
divisor $x_{1}+\cdots +x_{n}$ on $X$ by $D$. Let $E$ be a holomorphic vector bundle on $X$ of
rank $r$.

A {\it quasi-parabolic structure} over $E$ is a strictly decreasing
filtration of linear subspaces 
\[E_{_{x_i}}=F_i^1 \supset F_i^2 \supset \cdots \supset F_i^{k_i}\supset
F_i^{{k_i}+1} = 0
\]
for every $x_i\in D$. We set \[r_j^i \,:= \,\mbox{dim}F_i^j - \mbox{dim}F_i^{j+1}\, .\] 
The integer $k_i$ is called the {\it length} of the flag and the sequence $(r_1^{i}, 
r_2^{i},\cdots, r_{k_i}^{i})$ is called the {\it type} of the flag at $x_i$. A {\it 
parabolic structure} on $E$ over the divisor $D$ is a quasi-parabolic structure
as above together with a sequence of real numbers
\[0\leq \alpha_1^i < \alpha_2^i < \cdots < \alpha_{k_i}^i < 1.\] 
The {\it parabolic degree} of $E$ is defined to be 
\[\mbox{par-deg}(E):=\mbox{deg}(E)+\sum_{x_i\in D}\sum_{j=1}^{k_i}\alpha_j^ir_j^i\] 
and the {\it parabolic slope} of $E$ is
\[\mbox{par-}\mu(E) :=\frac{\mbox{par-deg}(E)}{r}.\]
(See \cite{MS}.)

For any subbundle $F\subseteq E$, there exists an induced parabolic structure on $F$
whose quasi-parabolic filtration over $x_i$ is given by the distinct subspaces in
\[F_{x_i}=F_i^1 \cap F_{x_i} \supset F_i^2 \cap F_{x_i} \supset \cdots \supset F_i^{k_0}\cap F_{x_i}\supset 0\, ,\] 
where $k_0 := {\rm max}\{j\in \{1,\cdots, k_i\} \mid F_i^j\cap F_{x_i}\neq 0\}$;
the parabolic weight of $F_i^j \cap F_{x_i}$ is the maximum of all $\alpha_i^\ell$ such that
$F_i^j \cap F_{x_i}\,=\, F_i^\ell \cap F_{x_i}$.

A parabolic vector bundle $E$ with parabolic structure over $D$ is said to be
{\it stable} (respectively, {\it semistable}) if for every 
subbundle $0\neq F\subsetneq E$ equipped with the induced parabolic structure, we have 
\[ \mbox{par-}\mu(F) \,<\, \mbox{par-}\mu(E)\ \
\text{(respectively,}\ \mbox{par-}\mu(F) \,\leq\,\mbox{par-}\mu(E)\text{)}\, .\]

\subsection{Poincar\'e parabolic bundle}

Fix integers $r>1$ and $d$ and for each $i=1,2,\cdots, n$ a sequence of positive 
integers $\{r_j^i\}_{j=1}^{k_i}$ such that $\sum_{j=1}^{k_i}r_j^i =
r$ for each $i$. Then the coarse moduli space $M_{X}(d,r, \{\alpha_j^i\},\{r_j^i\})$ of 
semistable parabolic vector bundles of rank $r$, degree $d$, flag types 
$\{r_j^i\}$ and parabolic weights $\{\alpha_j^i\}_{j=1}^{k_i}$ at $x_i \,\in\, D$, $1 
\,\leq\, i \,\leq\, n$, is a normal projective variety \cite{MS}. The open subvariety 
$M_{X}(d,r, \{\alpha_j^i\},\{r_j^i\})^s$ of it consisting of stable parabolic bundles is 
smooth.

For a scheme $S$, let $\pi_X\,:\, X\times S\,\longrightarrow\, X$ and $\pi_S
\,:\, X\times S\,\longrightarrow\, S$ be the natural projections. 
For a vector bundle $U$ over $X \times S$, and $s\,\in\, S$, set $U_s \,:= \, U|_{X \times \{s\}}$.
Given a flag type $m_i=(r_1,\cdots,r_{k_i})$, $1\, \leq\, i\, \leq\, n$, with $\sum_{j=1}^{{k_i}}r_j\,=\,r$, define $\mathcal{F}_{m_i}$ to be the 
variety of flags of type $m_i$. Furthermore, for a vector bundle $U\,\longrightarrow\, S$ of rank
$r$, let $\mathcal{F}_{m_i}(U)\,\longrightarrow\, S$ be the bundle of flags of type $m_i$.

For each $x_i\,\in\, D$ we fix the flag type $m_i\,=\,(r_1^i, r_2^i, \cdots, r_{k_i}^i)$. A {\it family} 
of quasi-parabolic vector bundles parametrized by a scheme $S$ is defined to be a vector bundle 
$U$ over $X\times S$ together with sections $\phi_{x_i}\,:\,S \,\longrightarrow\,
\mathcal{F}_{m_i}(U|_{x_{i}\times 
S})$, $1\,\leq\, i \,\leq\, n$. Note that the section $\phi_{x_i}$ corresponds to a flag of subbundles of 
$U|_{x_{i}\times S}$ with flag type $m_i$ for each $i$. A family of parabolic bundles is given 
by associating weights $\{\alpha_j^i\}$ to each flag of subbundles over $x_{i}\times S$, $x_i\in 
D$. We denote the family of parabolic bundles by $U_*=(U,\phi,\alpha)$ and by $U_{s,*}$ the 
parabolic bundle $(U_s,\phi_s,\alpha)$ above $s \in S$.

It is known that if the elements of the set $\{d,r_j^i\mid 1\leq i \leq n, 1 \leq j \leq k_i\}$ 
have greatest common divisor equal to one then $M_{\alpha}^{s}:=M_{X}(d,r, 
\{\alpha_j^i\},\{r_j^i\})^s$ is a fine moduli space, meaning there exists a family 
$\mathcal{U}_{*}^{\alpha}:=(\mathcal{U},\phi,\alpha)$ parametrized by $M_{\alpha}^{s}$ with the 
property that $\mathcal{U}_{e,*}^{{\alpha}}$ is a stable parabolic bundle isomorphic to $E_*$ 
for all $[E_*]\,=\,e\,\in\, M_{\alpha}^{s}$ \cite[Proposition 3.2]{boden},
\cite{BD}. Moreover, if the 
parabolic weights $\{\alpha_j^i\}$ are chosen to be {\it generic}, i.e.,
the notions of stability and 
semi-stability coincide, then the moduli space $M_{X}(d,r, \{\alpha_j^i\},\{r_j^i\})$ is a 
smooth, irreducible, projective variety. We denote this variety by $M_{\alpha}$.

Now assume that the weights are generic, $\alpha_j^i$ are rational numbers and $r_j^i=1$, so we 
are choosing full flags at each points of $D$. Note that this is the generic case.
There is a well defined {\it determinant 
morphism} $\det\,:\,M_{\alpha}\,\longrightarrow\, J^d(X)$, where $J^d(X)$ denotes the component
of the Picard group of $X$ consisting of line bundles
of degree $d$. For $\Lambda\in J^d(X)$, denote the fiber $\det^{-1}(\Lambda)$ by 
$M_{\alpha}(\Lambda)$, and the restriction of the vector bundle $\mathcal{U}$ to $X\times 
M_{\alpha}(\Lambda)$ by (with a mild abuse of notation) $\mathcal{U}$. From the earlier discussions
it is clear that the vector bundle $\mathcal{U}$ over $X\times 
M_{\alpha}(\Lambda)$ gets a natural parabolic structure over the smooth divisor $D\times 
M_{\alpha}(\Lambda)$.

\subsection{Strongly Parabolic Higgs Fields}

In this subsection we will briefly recall some properties of strongly parabolic Higgs fields and 
the Hitchin map; for details see \cite[Sections 2,3]{GL}. As before, we assume that the 
weights are generic and full flags at each points of $D$.

Let $K_X$ denotes the holomorphic cotangent bundle of $X$.

A {\it parabolic Higgs field} on a parabolic vector bundle $E_*$ is a 
homomorphism $$\Phi\,:\, E \,\longrightarrow\, E\otimes K\otimes{\mathcal O}_X(D)\,=\,
E\otimes K(D)$$
such that
\begin{enumerate}
\item $\text{trace}(\Phi)\,=\, 0$, and

\item $\Phi$ is a {\it strongly parabolic} homomorphism, meaning for each $x_i \in D$ we have
$\Phi(F_i^j) \,\subset\, F_i^{j+1} \otimes (K(D)|_{x_i})$.
\end{enumerate}
The pair $(E_*,\, \Phi)$ is called a parabolic
Higgs bundle.

A parabolic Higgs bundle $(E_*, \, \Phi)$ is called {\it stable} (respectively,
{\it semistable}) if for all proper non-zero $\Phi$-invariant 
sub-bundles $F$ of $E$, we have $\mbox{par-}\mu(F) \,<\, \mbox{par-}\mu(E)$
(respectively, $\mbox{par-}\mu(F) \,\leq\, \mbox{par-}\mu(E)$).
The cotangent space at $[E]\,\in\, M_{\alpha}(\Lambda)$ can be identified 
with $H^0(X, \mbox{SParEnd}_0(E)\otimes K_X(D))$ where $\mbox{SParEnd}_0(E)$ is the sheaf of 
strongly parabolic traceless endomorphisms. Then the coefficients of the characteristic polynomial of 
$\phi\,\in\, H^0(X, \mbox{SParEnd}_0(E)\otimes K_X(D))$ lie in $W\,:=\,\bigoplus_{j=2}^{r}
H^0(X,\, K_{X}^j((j-1)D))$.

Let $N_{\alpha}(\Lambda)$ be the moduli space of isomorphism classes of strongly parabolic 
stable Higgs bundles with parabolic structures over $D$ and weights $\{\alpha_j^i\}_{j=1}^{r}$ 
at $x_i \,\in\, D$, $1 \,\leq\, i \,\leq\, n$, with fixed determinant $\Lambda$. The total space 
$T^{*}M_{\alpha}(\Lambda)$ of the cotangent bundle is an open subvariety of the moduli space 
$N_{\alpha}(\Lambda)$. The map $$h\,:\,N_{\alpha}(\Lambda)\,\longrightarrow\,
W\, , \ \ (E,\,\phi)\,\longmapsto \, ({\rm trace}(\wedge^2\phi),\cdots ,{\rm trace}(\wedge^r\phi))$$
is proper and surjective; it is called the {\it Hitchin 
map}. If $s\,\in\, W$ such 
that the corresponding spectral curve $X_{s}$ is smooth, then the fiber $h^{-1}(s)$
is identified with the Prym variety 
$$\mbox{Prym}^{\delta}(X_s)\,=\,\{L\,\in\, J^{\delta}(X_s)\,\mid\,
\det(\pi_*L)\simeq \Lambda\}$$ 
associated to $X_{s}$, where $\delta\,:=\,d-\mbox{deg}(\pi_*(\mathcal{O}_{X_s}))$.

A parabolic bundle on $X$ is called {\it very stable} if there is no non-trivial nilpotent strongly 
parabolic Higgs field on it. It is known that, if the genus $g(X)\geq 2$, a very stable parabolic bundle is stable. There 
exist very stable parabolic bundles in any moduli space. In fact the subset of very stable parabolic 
bundles is a dense open set in $M_{\alpha}(\Lambda)$. This follows from the fact the dimension 
of the nilpotent cone $h^{-1}(0)$ is same as the dimension of the moduli space 
$M_{\alpha}(\Lambda)$ \cite[Corollary 3.10]{gmg}. Let
$$S'\, \subset\, S\, :=\, T^*M_{\alpha}(\Lambda)\cap
h^{-1}(0)$$ be the open subset consisting of all $(E,\phi)\,\in\, S$ such
that $\phi$ is nonzero. The image of $S'$ in $M_{\alpha}(\Lambda)$ under the forgetful map
$(E,\phi)\,\longmapsto\, E$ will be denoted by $B$. Note that $B$ is the non-very stable
locus in $M_{\alpha}(\Lambda)$. On the other hand, there is a free action of
${\mathbb C}^*$  on $S'$; namely the action of any $c\,\in\, {\mathbb C}^*$ sends any
$(E,\phi)$ to $(E,c\cdot\phi)$. Hence we have
$$
\dim M_{\alpha}(\Lambda) \,=\, \dim S\, =\, \dim S' \, >\, \dim B\, .
$$
This implies that the complement $M_{\alpha}(\Lambda) \setminus B$ is nonempty.

\subsection{Determinant bundle}

Let $T$ be a variety. For any coherent sheaf $\mathcal{E}$ on $X\times T$, flat over $T$, let 
$\det R\pi_{T}\mathcal{E}$ denote {\it determinant line bundle} defined as:
\[\{\det R\pi_{T}\mathcal{E}\}_t :=\{\det H^0(X,\mathcal{E}_t)\}^{-1}\otimes \{\det H^1(X,\mathcal{E}_t)\}\]
for $t\,\in\, T$ (\cite{BR}, \cite{nara}, \cite{Sunn}).

Let $x$ be a fixed closed point of $X$. We fix rational numbers $0\leq \alpha_1<\alpha_2<\cdots< \alpha_r<1$ 
and a positive integer 
$k$ such that $\beta_j=k\cdot \alpha_j$ is an integer for each $j=1, \cdots, r$. 
Set $d_j:= \beta_{j+1}-\beta_j, 1\leq j \leq r$, with the assumption that
$\beta_{r+1}\,=\,1$. 
Let $\mathcal{V}_*^{\alpha}=(\mathcal{V}^{\alpha}, \alpha=\{\alpha_j\}_{j=1}^{r},\phi)$ be a family of 
rank $r$ stable parabolic bundles over $X$ with parabolic divisor $\{x\}$ parametrized by a variety 
$T$ and
\[\mathcal{V}^{\alpha}|_{x\times T}=\mathcal{F}_{1,x}\supset \mathcal{F}_{2,x}\supset \cdots \supset 
\mathcal{F}_{r,x}\supset \mathcal{F}_{r+1,x}=(0)\] 
be the full flag of subbundles over $x\times T$ determined by the section $\phi$. 
Set $L_j := \frac{\mathcal{F}_{j,x}}{\mathcal{F}_{j+1,x}}$. Let $\Psi: T \longrightarrow M_{\alpha}(\Lambda)$ 
be the morphism induced by this family. Define a line bundle
\[\theta_{T} :=(\det R\pi_{T}\mathcal{V}^{\alpha})^{k}\otimes \det (\mathcal{V}_x^{\alpha})^l
\otimes \otimes_{j=1}^{r} L_j^{d_j}\]
where $l$ is a positive integer determined by \cite[Equation (*), page 6]{Sunn}. Then there exists a unique 
(up to algebraic equivalence) ample line bundle $\Theta_{M_{\alpha}}$ over $M_{\alpha}(\Lambda)$ such that
$\Psi^*\Theta_{M_{\alpha}}=\theta_{T}$ \cite{BR}, \cite[Theorem 1.2]{Sunn}.

\section{parabolic stability of the parabolic Poincar\'e bundle}

In this section we continue with the notation of the previous section.

Let $X$ be a smooth projective irreducible complex curve of genus $g \geq 2$, 
$x_1, \cdots, x_n \,\in\, X$ distinct points and $D=x_1+\cdots+x_n$. Let $Y$ be a 
smooth projective irreducible complex variety.

For each point $x_{i}\in \text{Supp}(D)$ 
fix real numbers $0\leq \alpha_1^i < \alpha_2^i < \cdots <\alpha_{k_i}^i < 1$
and $m_i=(r_1^i,\cdots, r_{k_i}^i)$, where each $r_j^i$ is a positive integer. Let $U$ be a rank $r$ vector 
bundle over $X\times Y$ with parabolic structure over the smooth divisor $D\times Y$, of flag types $m_i$ 
and weights $\{\alpha_j^i\}, 1 \leq i\leq n, 1 \leq j \leq k_i$. Fix ample divisors $\theta_{X}$ on $X$ and 
$\theta_{Y}$ on $Y$. Then for any integers $a,b>0$, the class $a\theta_{X}+b\theta_{Y}$ is ample on $X\times Y$. 
Let $\Theta := a\theta_{X}+b\theta_{Y}$ for some fix integers $a,b>0$.

\begin{lemma}\label{two stable}
Suppose that for a general point $x\in X,$ the vector
bundle $U_x\,=\, U\vert_{\{x\}\times Y}$ is semi-stable with respect to $\theta_Y$
over $Y$, and 
for a general point $y\,\in\, Y$ the parabolic vector bundle $U_y\,=\,
U\vert_{X\times\{y\}}$ over $X$ with parabolic divisor $D$ is semi-stable with respect 
to $\theta_X$. Then the parabolic vector bundle $U$ with
parabolic divisor $D\times Y$ is parabolic semi-stable with respect to 
$\Theta$. Moreover, if $U_x$ is stable or $U_y$ is parabolic stable, then
$U$ is also parabolic stable.
\end{lemma}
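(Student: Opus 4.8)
The plan is to verify the parabolic slope inequality directly for an arbitrary saturated subsheaf $F\subsetneq U$ (it suffices to test saturated subsheaves, since passing to the saturation only raises the parabolic degree), and to reduce everything to the two fibrewise hypotheses by a generic restriction argument in the spirit of \cite{BBN}. Write $N=\dim Y$, so $\dim(X\times Y)=N+1$ and $\deg_\Theta(\cdot)=c_1(\cdot)\cdot\Theta^{N}$.

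First I would expand the polarization. Since $\theta_X$ is pulled back from the curve $X$, one has $\theta_X^2=0$, so in the binomial expansion of $\Theta^{N}=(a\theta_X+b\theta_Y)^{N}$ only two terms survive:
\[
\Theta^{N}=b^{N}\theta_Y^{N}+N\,a\,b^{N-1}\,\theta_X\theta_Y^{N-1}.
\]
Set $p:=\int_Y\theta_Y^{N}>0$ and $q:=\deg_X\theta_X>0$. Intersecting $c_1(F)$ against these two (pulled back) classes expresses each in terms of the restrictions of $F$ to general fibres, namely $c_1(F)\cdot b^{N}\theta_Y^{N}=b^{N}p\,\deg(F_y)$ and $c_1(F)\cdot N a b^{N-1}\theta_X\theta_Y^{N-1}=N a b^{N-1}q\,\deg_{\theta_Y}(F_x)$, where $F_y:=F|_{X\times\{y\}}$ and $F_x:=F|_{\{x\}\times Y}$ for general $y$ and $x$. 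The parabolic divisor is $D\times Y$, and for each component one has $[\{x_i\}\times Y]\cdot\Theta^{N}=b^{N}p$; hence the entire parabolic weight contribution sits in the $b^{N}$ term, and for general $y$ it reassembles precisely into the induced parabolic weights of $F_y$. Collecting terms and dividing by $\mathrm{rank}(F)$ yields the key identity
\[
\mbox{par-}\mu_\Theta(F)=b^{N}p\cdot \mbox{par-}\mu(F_y)+N a b^{N-1}q\cdot \mu_{\theta_Y}(F_x),
\]
together with the same identity for $U$, $U_y$, $U_x$ in place of $F$, $F_y$, $F_x$.

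Next I would run the generic restriction. For $y$ in a dense open subset of $Y$ the sheaf $F_y$ is a subsheaf of $U_y$ of the same rank as $F$, carrying exactly the induced parabolic structure; since $U_y$ is parabolic semistable with respect to $\theta_X$, this gives $\mbox{par-}\mu(F_y)\le \mbox{par-}\mu(U_y)$. Likewise, for $x$ in a dense open subset of $X$ (in particular $x\notin D$, so $U_x$ carries no parabolic structure) one has $F_x\subset U_x$, and semistability of $U_x$ with respect to $\theta_Y$ gives $\mu_{\theta_Y}(F_x)\le\mu_{\theta_Y}(U_x)$. Because the coefficients $b^{N}p$ and $N a b^{N-1}q$ are strictly positive, substituting these inequalities into the identity above yields $\mbox{par-}\mu_\Theta(F)\le \mbox{par-}\mu_\Theta(U)$, which proves parabolic semistability. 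If moreover $U_x$ is stable (respectively, $U_y$ is parabolic stable), then for general $x$ (respectively, $y$) the sheaf $F_x$ (respectively, $F_y$) is a proper nonzero subsheaf of a stable bundle, so the corresponding inequality is strict; since its coefficient is positive, the combined inequality is strict and $U$ is parabolic stable.

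The step requiring the most care is the bookkeeping of the generic restriction: one must choose $x$ and $y$ general enough that simultaneously (i) $U_x$ and $U_y$ are (semi)stable, (ii) the restriction maps $F_x\hookrightarrow U_x$ and $F_y\hookrightarrow U_y$ remain injective of full rank, and (iii) the intersection numbers $\deg(F_y)$ and $\deg_{\theta_Y}(F_x)$ genuinely equal the degrees of the restricted sheaves while the induced weights of $F$ along $D\times Y$ restrict to the induced weights of $F_y$. Each of these conditions fails only on a proper closed subset, so a simultaneously general choice exists; here saturatedness of $F$ (equivalently, torsion-freeness of $U/F$) is exactly what guarantees injectivity of the restrictions on a dense open set, which is why the reduction to saturated subsheaves at the outset is essential.
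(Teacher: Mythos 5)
Your argument is correct and is essentially the paper's: the authors simply defer to the proof of Lemma~2.2 of \cite{BBN}, remarking only that the extra parabolic contributions are degrees of sheaves supported on $D\times Y$ and are handled the same way, and your expansion of $\Theta^{N}$ using $\theta_X^2=0$ together with the generic‑restriction step is precisely that argument written out. The one point you flag as delicate --- that the induced parabolic structure on $F$ restricts, for general $y$, to the parabolic structure induced on $F_y\subset U_y$ --- is exactly the ``modification'' the paper alludes to, and your treatment of it is adequate.
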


\begin{proof}
The proof essentially follows from the proof of \cite[Lemma 2.2]{BBN}. Let us 
indicate the modification needed in this case. Let $F\,\subset\, U$ be a torsionfree 
subsheaf. Then it has an induced parabolic structure. To compute the parabolic degrees
of $U$ and $F$, one needs to 
compute the degree of certain vector bundles supported on the smooth divisor 
$D\times Y$. But this is same as computing the degree of certain subsheaves of $U$ 
and $F$, which can be done as in \cite[Lemma 2.2]{BBN}.
\end{proof}

For the rest of this section we assume that the parabolic weights are rational, generic and full 
flags at each points of $\text{Supp}(D)$.

Set $s \,=\, (s_2,\cdots,s_r)\in W$, and let $\pi:X_s \longrightarrow X$ be the associated spectral cover. 
Then for $z\in X$, the fiber $\pi^{-1}(z)$ is given by the points $y\,\in\,
K_{X}(D)|_{z}$ which satisfy the polynomial 
 \[y^r+s_{2}(z)\cdot y^{r-2}+\cdots +s_r(z)\,=\,0\, .\] 
 Let us denote this polynomial by $f$. The morphism $\pi$ is unramified over $z$ if
and only if the resultant $R(f,f')$ of $f$ and its derivative $f'$ are nonzero. Since
all $s_j$ vanish over $D$, the ramification locus of $\pi$ contains $D$. 
 
\begin{lemma}\label{unram}
 Let $X$ be a smooth, irreducible, projective curve of genus $g\geq 2$ and $z\notin \mbox{Supp}(D)$. There exists a smooth, projective spectral curve $Y$ and finite morphism 
 $\pi:Y\longrightarrow X$ of degree $r$ which is unramified over $z$.
\end{lemma}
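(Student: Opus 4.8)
The plan is to realize $Y$ as a smooth spectral curve $X_s$ for a suitably chosen $s=(s_2,\cdots,s_r)\in W=\bigoplus_{j=2}^{r}H^0(X,K_X^j((j-1)D))$. The two requirements on $s$ are that $X_s$ be smooth and that the cover $\pi_s:X_s\to X$ be unramified over $z$. Both are Zariski-open conditions on $s$: the singular fibres of the universal spectral curve over $W$ form a closed set (compactify inside $\mathbb{P}(K_X(D)\oplus\mathcal{O}_X)$ and use properness of the projection to $W$), while unramifiedness over $z$ is governed by the non-vanishing of the resultant $R(f,f')$, where $f$ is the fibre polynomial at $z$. Since $W$ is an affine space, hence irreducible, any two nonempty open subsets must meet. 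So the whole problem reduces to showing that each of the two conditions is satisfied by \emph{some} $s\in W$; I would then take any $s$ in the intersection and set $Y:=X_s$, $\pi:=\pi_s$.

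First I would handle the unramified condition, which is where the hypothesis $z\notin\mathrm{Supp}(D)$ enters. The fibre $\pi^{-1}(z)$ consists of the roots of $f(y)=y^r+s_2(z)y^{r-2}+\cdots+s_r(z)$, so $\pi$ is unramified over $z$ exactly when $f$ is separable. I would first fix the \emph{target} polynomial: choose $r$ distinct complex numbers $\lambda_1,\cdots,\lambda_r$ with $\sum_i\lambda_i=0$ (possible since $r\geq 2$), so that $\prod_i(y-\lambda_i)=y^r+c_2y^{r-2}+\cdots+c_r$ is separable and has vanishing $y^{r-1}$ coefficient, matching the shape of $f$. It then remains to find sections $s_j$ realizing the prescribed values $c_j$ at $z$. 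Because $g\geq 2$, one has $\deg K_X^j((j-1)D)=j(2g-2)+(j-1)n\geq 2g$ for every $j\geq 2$, so each of these line bundles is globally generated; in particular the evaluation map $H^0(X,K_X^j((j-1)D))\to K_X^j((j-1)D)|_z$ is surjective (here $z\notin\mathrm{Supp}(D)$ guarantees that the sections are regular at $z$, so this is ordinary evaluation). Fixing a trivialization of the fibre at $z$, I can thus realize the values $c_j$ and produce an $s$ for which $\pi_s$ is unramified over $z$; hence this open locus is nonempty.

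For smoothness I would invoke the standard spectral-curve theory already underlying the Hitchin map $h$ of Section~2 (see \cite{GL}): for generic $s\in W$ the curve $X_s$ is smooth. Away from $D$ this is a Bertini argument in the total space of $K_X(D)$. Over $D$ the curve is totally ramified ($f=y^r$ there, since all $s_j$ vanish on $D$), and a local computation at $x_i$ shows that $X_s$ is nonetheless smooth at the point above $x_i$ as soon as $s_r$ vanishes to order exactly one there, which is again an open dense condition on $s$. Therefore the smooth locus in $W$ is a nonempty open set. Intersecting it with the nonempty open locus from the previous paragraph gives the required $s$, and $Y=X_s$ together with $\pi=\pi_s$ completes the argument.

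The main obstacle is conceptual rather than computational: arranging smoothness of $X_s$ and the prescribed unramified fibre over $z$ \emph{simultaneously}. This is precisely what the openness of both conditions, combined with the irreducibility of the affine space $W$, resolves, so no delicate simultaneous construction is needed. The only genuine verifications are the surjectivity of evaluation at $z$ (which is where $z\notin\mathrm{Supp}(D)$ and $g\geq 2$ are used) and the local smoothness of the totally ramified fibres over $D$.
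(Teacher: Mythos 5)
Your proposal is correct and follows essentially the same route as the paper: both conditions (smoothness of $X_s$ and unramifiedness over $z$) are nonempty open conditions on $s$ in the irreducible affine space $W$, with nonemptiness of the unramified locus coming from base-point-freeness of $|K_X^j((j-1)D)|$ away from $D$ and smoothness of the generic spectral curve coming from \cite[Lemma 3.1]{GL}. Your version merely fills in more detail (explicit target polynomial at $z$, degree count for global generation, sketch of the smoothness argument) than the paper's proof does.
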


\begin{proof}
Since the linear system $|K_X^{j}((j-1)D)|$ is base point free outside $D$ and $z\notin \mbox{Supp}(D)$, 
there exists $(s_2,\cdots s_r)\in W$ such that \[ R(f,f')(s_2(z),\cdots s_r(z))\neq 0.\]
Clearly this is an open condition in $W$. Thus there exists a non-empty 
open subset $V$ of $W$ such that for each $s \in V$, the corresponding spectral cover 
$X_s \longrightarrow X$ is unramified over $z$. Now, since the genus $g\geq 2$, by \cite[Lemma 3.1]{GL} 
the set of points in $W$ where the corresponding spectral curve is smooth is an dense open subset of $W$. 
Thus we can always choose a spectral curve which is smooth and unramified over $z$.
\end{proof}

\begin{lemma}
 Let $X_s \longrightarrow X$ be a spectral curve. Let $P^{\delta}$ be the associated Prym
variety, where $\delta\,:=\,d-\mbox{deg}(\pi_*(\mathcal{O}_{X_s}))$. Then there is a 
dominant rational map $f\,:\, P^{\delta}\,\dashrightarrow \,M_{\alpha}(\Lambda)$.
\end{lemma}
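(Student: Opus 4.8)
The plan is to construct the rational map $f$ explicitly using the spectral correspondence between line bundles on the spectral curve $X_s$ and parabolic Higgs bundles on $X$, and then argue that the image is dominant by a dimension count. I recall that for a smooth spectral curve $X_s$, the Hitchin fiber $h^{-1}(s)$ is identified with the Prym variety $P^{\delta}$, and each point of this fiber is a strongly parabolic Higgs bundle $(E,\phi)$ with fixed determinant $\Lambda$. The natural assignment is therefore to send a line bundle $L\in P^{\delta}$ to the underlying parabolic bundle of its associated Higgs bundle, i.e., $L\mapsto \pi_*L$ equipped with the induced parabolic structure, forgetting the Higgs field $\phi$. Concretely, I would compose the isomorphism $P^{\delta}\xrightarrow{\sim} h^{-1}(s)\subset N_{\alpha}(\Lambda)$ with the forgetful map $(E,\phi)\mapsto E$ landing in $M_{\alpha}(\Lambda)$.

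First I would make precise why this is only a \emph{rational} map rather than a morphism: the forgetful map $(E,\phi)\mapsto E$ is defined only on the locus where the underlying parabolic bundle $E$ is itself stable (not merely Higgs-stable), so $f$ is defined on the open dense subset of $P^{\delta}$ parametrizing Higgs bundles with stable underlying parabolic bundle. One should check this locus is nonempty; since the generic Higgs bundle in the fiber has a stable underlying bundle (the non-stable locus is a proper closed subset), the map $f$ is a well-defined dominant rational map on a dense open set. Next I would address dominance by comparing dimensions. The key point is that $\dim P^{\delta} = \dim M_{\alpha}(\Lambda)$: indeed, $P^{\delta}$ is identified with a generic Hitchin fiber, and since the Hitchin map $h:N_{\alpha}(\Lambda)\to W$ is proper and surjective with $N_{\alpha}(\Lambda)$ of dimension $2\dim M_{\alpha}(\Lambda)$ while $\dim W = \dim M_{\alpha}(\Lambda)$, each fiber has dimension exactly $\dim M_{\alpha}(\Lambda)$.

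Given the equality of dimensions, dominance of $f$ follows once I show that the image of $f$ is not contained in a proper closed subvariety. The cleanest way is to argue that $f$ has finite generic fibers: if $E=\pi_*L$ with its induced parabolic structure recovers $E$, one needs that only finitely many $L\in P^{\delta}$ give the same underlying parabolic bundle $E$. This is because specifying $E$ together with the fixed spectral data $s$ pins down $\phi$ up to the finitely many Higgs fields compatible with $E$ lying over $s$; equivalently, the fiber of $f$ over a general $E$ is contained in the set of $\phi$ with characteristic coefficients $s$, which under the spectral correspondence is discrete. Since $\dim P^{\delta}=\dim M_{\alpha}(\Lambda)$ and the generic fiber is finite, the image has full dimension and $f$ is dominant.

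The main obstacle I anticipate is verifying the finiteness of the generic fiber rigorously: one must rule out positive-dimensional families of line bundles on $X_s$ all pushing forward to the same parabolic bundle $E$ on $X$. I would handle this by invoking injectivity of the spectral construction on the smooth locus --- for a fixed smooth spectral curve, distinct line bundles $L$ yield distinct eigensheaf structures, hence distinct Higgs pairs $(\pi_*L,\phi_L)$, and the only ambiguity in recovering $L$ from $E$ alone comes from the choice among the finitely many eigenvalues (the $r$ sheets of $\pi$), which is finite. With the parabolic and fixed-determinant constraints cutting $J^{\delta}(X_s)$ down to $P^{\delta}$ of the correct dimension, the dimension count closes and dominance is established.
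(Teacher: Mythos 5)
Your construction of the map is the same as the paper's: compose the identification $P^{\delta}\simeq h^{-1}(s)$ with the forgetful map $(E,\phi)\mapsto E$, defined on the open locus where the underlying parabolic bundle is stable. The gap is in your dominance argument. You reduce dominance to the claim that the generic fibre of $f$ is finite, but your justification of that claim does not address the actual issue. The fibre of $f$ over a stable parabolic bundle $E$ is (since $\mathrm{Aut}(E)=\mathbb{C}^{*}$ acts trivially by conjugation) the set
\[
h_E'^{-1}(s)\,=\,\{\phi\in H^0(X,\mbox{SParEnd}_0(E)\otimes K_X(D))\ \mid\ \mbox{char}(\phi)=s\}\, ,
\]
i.e.\ the fibre over $s$ of a polynomial map between two affine spaces of the same dimension. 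Your two stated reasons for finiteness --- that distinct line bundles $L$ give distinct Higgs pairs $(\pi_*L,\phi_L)$, and that the only ambiguity is the choice among the $r$ sheets of $\pi$ --- are respectively irrelevant and incorrect: the first only says $P^{\delta}\to N_{\alpha}(\Lambda)$ is injective, and the second ignores that recovering $L$ from $E$ \emph{alone} requires controlling the whole set $h_E'^{-1}(s)$, which a priori could be positive-dimensional (and empty for other $E$). Nothing in your argument rules out either possibility, and in fact finiteness of the generic fibre is logically equivalent to the dominance you are trying to prove, so the reasoning is essentially circular as written.

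The missing idea is the one the paper uses: the notion of a \emph{very stable} parabolic bundle. For a very stable $E$, the restriction $h'_E\,:\,T_E^{*}M_{\alpha}(\Lambda)\to W$ of the Hitchin map is surjective (this is \cite[Lemma 1.4]{KP}; it rests on the fact that $h'_E$ is a homogeneous map whose fibre over $0$ is just the origin when $E$ admits no nonzero nilpotent strongly parabolic Higgs field). Hence every very stable $E$ carries some $\phi$ with $\mbox{char}(\phi)=s$ and so lies in the image of $f$; since the very stable locus is dense in $M_{\alpha}(\Lambda)$, the map $f$ is dominant. If you want to keep your fibre-dimension formulation, you would still need this input to produce even one point of the image with finite fibre, so you cannot avoid introducing very stability (or an equivalent statement).
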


\begin{proof}
 Let $h'$ be the restriction of $h$ to the the total space of the cotangent bundle $T^{*}M_{\alpha}(\Lambda)$. 
 Then for any very stable parabolic bundle $E \,\in\, M_{\alpha}(\Lambda)$, the
restriction 
$$h'_E\,:\,T_E^*M_{\alpha}(\Lambda)\,\longrightarrow\, W$$ of $h'$ is surjective (for a proof see \cite[Lemma 1.4]{KP}). 
 Thus, for any $s\in W$, we have $h'^{-1}(s)\cap T_E^{*}M_{\alpha}(\Lambda)$ 
 is nonempty for every very stable parabolic bundle $E \,\in\, M_{\alpha}(\Lambda)$. 
 Consequently, for all $s\in W$, the image of the map $h'^{-1}(s)\longrightarrow M_{\alpha}(\Lambda)$ 
 contains the dense open set $U$ of all very stable parabolic bundles. Thus the morphism 
 $h'^{-1}(s) \longrightarrow M_{\alpha}(\Lambda)$ is dominant. 
 Since $h'^{-1}(s)\subseteq h^{-1}(s)\simeq P^{\delta}$ is an open set, we have a dominant rational 
 map $f\,:\, P^{\delta} \,\dashrightarrow\, M_{\alpha}(\Lambda)$.
\end{proof}

Now we discuss the `parabolic stability' of $\mathcal{U}$ with respect to a `naturally' defined ample 
divisor on $X\times M_{\alpha}(\Lambda)$. For the simplicity of the exposition we assume that $D=x$ 
(for an arbitrary reduced divisor the same arguments will hold).

\begin{theorem}\label{th1}
Let $z\notin \text{Supp}(D)$. Then $\mathcal{U}_z$ is semi-stable with respect
the ample divisor $\Theta_{M_{\alpha}}$.
\end{theorem}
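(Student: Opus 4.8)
The plan is to argue by contradiction, transferring the semistability question for $\mathcal{U}_z$ down from a direct sum of line bundles on the Prym variety via the dominant rational map $f\colon P^{\delta}\dashrightarrow M_{\alpha}(\Lambda)$ produced in the preceding lemma. So suppose $\mathcal{U}_z$ is not semistable with respect to the ample divisor $\Theta_{M_{\alpha}}$. Then it carries a unique maximal destabilizing subsheaf $F\subset \mathcal{U}_z$ satisfying $\mu_{\Theta_{M_{\alpha}}}(F)>\mu_{\Theta_{M_{\alpha}}}(\mathcal{U}_z)$ (here there is no parabolic structure to account for, since $z\notin\mathrm{Supp}(D)$). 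The canonicity of $F$ is precisely what will let me pull it back along $f$ unambiguously.

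First I would fix, using Lemma \ref{unram}, a smooth spectral curve $\pi\colon X_s\to X$ that is unramified over $z$, so that $\pi^{-1}(z)=\{p_1,\dots,p_r\}$ consists of $r$ distinct points, and recall that $h^{-1}(s)\simeq P^{\delta}$ carries the dominant rational map $f$, defined on a dense open set $\Omega\subset P^{\delta}$. Writing $\mathcal{P}$ for a Poincar\'e line bundle on $X_s\times P^{\delta}$ and $\pi_{P}=\pi\times\mathrm{id}$, the spectral (Beauville--Narasimhan--Ramanan) correspondence identifies the family $(\mathrm{id}_X\times f)^{*}\mathcal{U}$ on $X\times\Omega$ with $(\pi_{P})_{*}\mathcal{P}$, up to tensoring by a line bundle pulled back from $\Omega$ (reflecting the ambiguity of the Poincar\'e bundle). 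Restricting to $\{z\}\times\Omega$ and using that $\pi$ is finite and unramified over $z$, so that the pushforward commutes with base change to $z$, I obtain
\[
f^{*}\mathcal{U}_z\;\cong\;\bigoplus_{i=1}^{r}\mathcal{P}_{p_i}
\]
on $\Omega$, again up to a twist by a line bundle from $\Omega$, where $\mathcal{P}_{p_i}=\mathcal{P}|_{\{p_i\}\times P^{\delta}}$.

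The next step is to observe that $\bigoplus_{i=1}^{r}\mathcal{P}_{p_i}$ is semistable on $P^{\delta}$ with respect to \emph{every} polarization. Indeed, as $p$ ranges over the connected curve $X_s$ the line bundles $\mathcal{P}_p$ form an algebraic family, so they are mutually algebraically equivalent; writing $\mathcal{P}_{p_i}=\mathcal{P}_{p_1}\otimes Q_i$ with $Q_i\in\mathrm{Pic}^{0}(P^{\delta})$ reduces the claim, after twisting by $\mathcal{P}_{p_1}$, to the semistability of $\bigoplus_i Q_i$. Since the $Q_i$ are algebraically trivial they carry flat unitary structures, so $\bigoplus_i Q_i$ is a flat unitary (in particular numerically flat) bundle, and such a bundle satisfies $\deg_H(G)\le 0$ for every coherent subsheaf $G$ and every nef class $H$. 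Hence $\bigoplus_i \mathcal{P}_{p_i}$, and therefore $f^{*}\mathcal{U}_z$, is semistable with respect to any nef class, in particular with respect to $(\mathrm{id}_X\times f)^{*}\Theta_{M_{\alpha}}$.

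Finally I would derive the contradiction. Because $\dim P^{\delta}=\dim M_{\alpha}(\Lambda)$, the map $f$ is generically finite, say of degree $e$; and since $P^{\delta}$ is smooth, the indeterminacy locus of $f$ has codimension $\ge 2$, so intersection numbers computed on $\Omega$ agree with those on all of $P^{\delta}$, and the saturation of $(f|_{\Omega})^{*}F$ extends to a subsheaf $\widetilde F\subset f^{*}\mathcal{U}_z$ over $P^{\delta}$. The projection formula then gives $\deg_{f^{*}\Theta_{M_{\alpha}}}(\widetilde F)=e\,\deg_{\Theta_{M_{\alpha}}}(F)$, and similarly for $\mathcal{U}_z$, so the strict inequality $\mu_{\Theta_{M_{\alpha}}}(F)>\mu_{\Theta_{M_{\alpha}}}(\mathcal{U}_z)$ upgrades to $\mu_{f^{*}\Theta_{M_{\alpha}}}(\widetilde F)>\mu_{f^{*}\Theta_{M_{\alpha}}}(f^{*}\mathcal{U}_z)$, contradicting the semistability established above. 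I expect the main obstacle to be the second step: making the spectral identification $f^{*}\mathcal{U}_z\cong\bigoplus_i\mathcal{P}_{p_i}$ rigorous in families while $f$ is merely a rational map. This requires controlling the base change of $(\pi_{P})_{*}\mathcal{P}$ over the unramified point $z$ and the ambiguity of the Poincar\'e bundle up to twists from the base, whereas the flatness/semistability of the resulting bundle and the generically finite slope transfer are then routine.
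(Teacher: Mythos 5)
Your first three steps coincide with the paper's own argument: a smooth spectral cover unramified over $z$ (Lemma \ref{unram}), the identification $f^{*}\mathcal{U}_z\cong\bigoplus_{i}\mathcal{L}_{y_i}\otimes L_0$ on the domain of definition $T^{\delta}$ of $f$ (using that $M_{\alpha}(\Lambda)$ is a fine moduli space, so $(1\times f)^{*}\mathcal{U}$ agrees with the spectral family up to a twist from the base), and the observation that a direct sum of algebraically equivalent line bundles, suitably twisted, is semistable for every polarization. The genuine gap is in your last step, which you call ``routine'': to derive a contradiction you must measure degrees on $P^{\delta}$ against the class obtained by extending $f^{*}\Theta_{M_{\alpha}}$ across the indeterminacy locus, and nothing in your proposal shows that this extended class is nef, let alone ample. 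Your numerically-flat argument yields $\deg_H(G)\le 0$ only for \emph{nef} $H$, and transferring the destabilizing inequality through a generically finite \emph{rational} map (via a resolution of indeterminacy) introduces exceptional correction terms in the powers of the pulled-back polarization; both issues are controlled only once one exhibits an honest ample divisor on $P^{\delta}$ whose restriction to $T^{\delta}$ is $f^{*}\Theta_{M_{\alpha}}^{n}$. That is precisely the hypothesis of \cite[Lemma 2.1]{BBN}, which the paper invokes to conclude.

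Supplying this missing ingredient is what the entire second half of the paper's proof does, and it is the only non-formal part of the argument: one writes $f^{*}\Theta_{M_{\alpha}}=\theta_{T^{\delta}}=(\det R\pi_{T^{\delta}}\mathcal{E})^{k}\otimes\det(\mathcal{E}_x)^{l}\otimes\bigotimes_{j}L_j^{d_j}$ using the determinant-line-bundle description of the polarization (Biswas--Raghavendra, Sun), applies \cite[Theorem 4.3]{Li} to identify $(\det R\pi_{T^{\delta}}\mathcal{E})^{k}$ with a positive multiple of the restriction to $P^{\delta}$ of the theta divisor of $J^{\delta}(Y)$, and extends the remaining factors to a line bundle $M$ so that a suitable power of $m\Theta_{P^{\delta}}^{k}\otimes M$ is ample on $P^{\delta}$. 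So you have correctly located the reduction to the Prym variety, but you have misplaced the difficulty: the base-change and twist bookkeeping in your second step is standard, whereas the positivity of the extended polarization --- which your proposal assumes implicitly --- is the actual content of the proof and requires the determinant-bundle computation.
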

 
\begin{proof}
By Lemma \ref{unram} we get a spectral cover $\pi: Y\longrightarrow X$ which is
unramified over $z$. Let $\pi^{-1}(z)\,=\,\{y_1,\,\cdots,\,y_{r}\}$, with $y_i$
being distinct points in $Y$.

Let $\pi\times 1: Y\times P^{\delta}\longrightarrow X\times P^{\delta}$ denotes 
the product morphism. Let $\mathcal{L}$ denote the restriction of a Poincar\'e 
line bundle on $Y\times J^{\delta}(Y)$ to $Y\times P^{\delta}$. Then the
direct image $(\pi\times 
1)_{*}\mathcal{L}$ is a rank $r$ vector bundle and the $\mathcal{O}_{X\times 
P^{\delta}}$--algebra structure on $(\pi\times 1)_{*}\mathcal{L}$ defines a 
section $$\Phi\,\in\, H^0(X\times P^{\delta},\,\mbox{End}((\pi\times 
1)_{*}(\mathcal{L}))\otimes p_{X}^*K_{X}(D))\, .$$ This
$\Phi$ induces a parabolic 
structure on $(\pi\times 1)_{*}(\mathcal{L})$ over $x\times P^{\delta}$. Thus we 
have a family of parabolic bundles parametrized by $P^{\delta}$. Clearly, the 
rational map $f:P^{\delta}\dashrightarrow M_{\alpha}(\Lambda)$ is induced by the 
above family. Let $T^{\delta}$ be the open set where $f$ is defined. Then 
$\mbox{Codim}(P^{\delta}\setminus T^{\delta})\geq 2$.

Let $\mathcal{E}:=((\pi\times 1)_*\mathcal{L})|_{X\times T^{\delta}}$. Since 
$M_{\alpha}(\Lambda)$ is a fine moduli space we have
\[
(1\times f)^*\mathcal{U} 
\simeq \mathcal{E}\otimes p_{_{T^{\delta}}}^*(L_0)
\]
for some line bundle $L_{0}$ on $T^{\delta}$. Thus \[f^*\mathcal{U}_z \simeq 
\oplus_{i=1}^{r}\mathcal{L}_{y_i}\otimes L_0\] on $T^{\delta}$. Since 
$\mbox{Codim}(P^{\delta}\setminus T^{\delta})\geq 2$ and $P^{\delta}$ is smooth,
the line bundles $\mathcal{L}_{y_i}$ and $L_0$ uniquely extend over $P^{\delta}$. 
The line bundles $\mathcal{L}_{y_i}$ are already defined over $P^{\delta}$. Let 
$L'_0$ be the unique extension of $L_0$ over $P^{\delta}$. Since 
$\mathcal{L}_{y_i}$ are algebraically equivalent, it follows that
$\oplus_{j=1}^{r}\mathcal{L}_{y_i}\otimes L'_0$ is semistable with respect to any 
ample line bundle on $P^{\delta}$. Thus if we can find an ample line bundle $H$ 
over $P^{\delta}$ such that $H|_{T^{\delta}} \simeq f^*(\Theta_{M_{\alpha}}^n)$ 
for some positive integer $n$, then by \cite[Lemma 2.1]{BBN}, $\mathcal{U}_{z}$ is 
semistable with respect to $\Theta_{M_{\alpha}}^n$. Hence it is semistable with 
respect $\Theta_{M_{\alpha}}.$

We have, 
\[f^*\Theta_{M_{\alpha}}=\theta_{T^{\delta}}\,=\,
(\det R{\pi_{T^{\delta}}}\mathcal{E})^{k}\otimes 
\det(\mathcal{E}_x)^{l}\otimes \otimes_{j=1}^{r}L_j^{d_j}\, .
\]
By \cite[Theorem 4.3]{Li} we get that 
\[(\det R{\pi_{T^{\delta}}}\mathcal{E})^{k}\,=\,
m\Theta_{P^{\delta}}^{k}|_{_{T^{\delta}}}
\] 
for some positive integer $m$, where $\Theta_{P^{\delta}}^{k}$ is the restriction 
of the canonical theta divisor on $J^{\delta}(Y)$ to $P^{\delta}$. Let $M$ be the 
unique extension of $\det(\mathcal{E}_x)^l\otimes \otimes_{j=1}^{r}L_j^{d_j}$. Set 
$H:= m\Theta_{P^{\delta}}^{k}\otimes M$. Then for some positive integer $q, H^q$ is 
ample on $P^{\delta}$. Thus 
$f^*\Theta_{M_{\alpha}}^n$ is a restriction of an ample line bundle $H$ on 
$P^{\delta}$.
\end{proof}

As a corollary of Theorem \ref{th1} and Lemma \ref{two stable} we obtain the main 
result:

\begin{theorem}
 The parabolic bundle $\mathcal{U}$ over $X\times M_{\alpha}(\Lambda)$ is parabolic stable with respect 
 to any integral ample divisor of the form $aD_X+b\Theta_{M_{\alpha}}$, where $D_X$ is an ample divisor 
 on $X$ and $a,b>0$.
\end{theorem}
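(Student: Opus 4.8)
The plan is to combine the two main ingredients already established: Theorem~\ref{th1}, which handles semistability of the restriction $\mathcal{U}_z$ to vertical slices $\{z\}\times M_{\alpha}(\Lambda)$ for $z\notin\mathrm{Supp}(D)$, and Lemma~\ref{two stable}, which upgrades fibrewise semistability/stability to parabolic stability on the product. First I would set $Y\,=\,M_{\alpha}(\Lambda)$, $\theta_X\,=\,D_X$ and $\theta_Y\,=\,\Theta_{M_{\alpha}}$, so that the ample class in question is exactly $\Theta\,=\,aD_X+b\Theta_{M_{\alpha}}$ of the form appearing in Lemma~\ref{two stable}. To invoke that lemma I must verify its two hypotheses. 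The horizontal condition is the defining property of the Poincar\'e parabolic bundle: for each $y\,=\,[E_*]\,\in\, M_{\alpha}(\Lambda)$ the restriction $\mathcal{U}_y$ is the stable parabolic bundle $E_*$ itself, hence $\mathcal{U}_y$ is parabolic stable with respect to $\theta_X\,=\,D_X$ for \emph{every} $y$, in particular for a general one. The vertical condition is supplied by Theorem~\ref{th1}: for a general point $z\in X$ (equivalently, any $z\notin\mathrm{Supp}(D)$), the bundle $\mathcal{U}_z$ over $M_{\alpha}(\Lambda)$ is semistable with respect to $\Theta_{M_{\alpha}}$.

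With both hypotheses of Lemma~\ref{two stable} in hand, the lemma immediately yields parabolic semistability of $\mathcal{U}$ on $X\times M_{\alpha}(\Lambda)$ with respect to $\Theta$. For the stability conclusion I would then appeal to the ``moreover'' clause of the same lemma: since the horizontal restriction $\mathcal{U}_y\,\simeq\, E_*$ is parabolic \emph{stable} (not merely semistable) for general $y$, the lemma promotes the conclusion from parabolic semistability to parabolic stability of $\mathcal{U}$. Thus $\mathcal{U}$ is parabolic stable with respect to $\Theta\,=\,aD_X+b\Theta_{M_{\alpha}}$ for all integers $a,b>0$, which is precisely the assertion.

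The delicate point to watch is genericity of the slice: Lemma~\ref{two stable} only requires the hypotheses for a \emph{general} point $x\in X$, whereas Theorem~\ref{th1} is stated for points $z\notin\mathrm{Supp}(D)$. Since $\mathrm{Supp}(D)$ is a finite set, its complement is a dense open subset of $X$, so ``general $x\in X$'' can be taken to lie off $\mathrm{Supp}(D)$ and Theorem~\ref{th1} applies. Consequently I do not regard the genericity matching as a genuine obstacle; the only real substance of the theorem has already been discharged in Theorem~\ref{th1}, and the final argument is the formal assembly described above. I would also remark, following the note in the statement of the results, that although the exposition fixes $D\,=\,x$ for simplicity, the same argument applies verbatim to an arbitrary reduced divisor $D$, since Lemma~\ref{two stable} is stated for the general parabolic divisor $D\times Y$ and Theorem~\ref{th1} extends by the remark preceding it.
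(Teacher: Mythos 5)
Your proposal is correct and is essentially the paper's own argument: the paper derives the theorem directly as a corollary of Theorem~\ref{th1} (semistability of $\mathcal{U}_z$ for $z\notin\mathrm{Supp}(D)$, a dense open condition) and Lemma~\ref{two stable} (with its ``moreover'' clause triggered by the parabolic stability of $\mathcal{U}_y\simeq E_*$, which holds by the defining property of the Poincar\'e parabolic bundle). Your handling of the genericity matching and the reduction to $D=x$ is likewise consistent with the paper.
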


\section*{Acknowledgements}

The third-named author is supported by NBHM Post-doctoral Fellowship, DAE 
(Government of India). The second-named author is supported by a J. C. Bose
Fellowship.

\end{document}